\newtheorem{theorem}{Theorem}[section]
\newtheorem{lemma}[theorem]{Lemma}
\newtheorem{proposition}[theorem]{Proposition}
\newtheorem{corollary}[theorem]{Corollary}
\newcommand{\B}{\mathbb{B}}
\newcommand{\x}{\mathbf{x}}
\renewcommand{\S}{\mathbb{S}}
\newtheorem{maintheorem}{Theorem}
\theoremstyle{remark} 
\newtheorem{remark}{Remark}
\numberwithin{equation}{section}
\title[Rigidity of surfaces with nonpositive Euler characteristic]{Rigidity of surfaces with nonpositive Euler characteristic by the second eigenvalue of the Jacobi operator}
\author[Batista]{M. Batista}
\author[Cavalcante]{M.\,P.\,Cavalcante} 
\author[Mendes]{A. Mendes}
\author[Nunes]{I. Nunes}
\address{
  Instituto de Matem\'atica, Universidade Federal de Alagoas, Macei\'o - Brazil}
\email{mhbs@mat.ufal.br}
 \email{marcos@pos.mat.ufal.br}
 \email{abraao.mendes@im.ufal.br}
\address{
Departamento de Matemática, Universidade Federal do Maranhão,
 São Luís - Brazil}
\email{ivaldo.nunes@ufma.br}
 \subjclass[2020]{Primary 53C24, 58J50; Secondary 49Q05, 53A10.}
\date{\today}
\keywords{Jacobi operator, second eigenvalue, spectral rigidity, Willmore energy, minimal surfaces, stability index.}
\begin{document}

\begin{abstract}
In this paper, we investigate the spectral properties of the Jacobi operator for immersed surfaces with nonpositive Euler characteristic, extending previous results in the field. We first prove a sharp upper bound for the second eigenvalue of the Jacobi operator for compact surfaces with nonpositive Euler characteristic that are fully immersed in the Euclidean sphere, and then we classify all such surfaces attaining this upper bound. Furthermore, we demonstrate that totally geodesic tori maximize the second eigenvalue among all compact orientable surfaces with positive genus in the product space $\mathbb{S}^1(r) \times \mathbb{S}^2(s)$.
\end{abstract}

\maketitle
\section{Introduction}
The study of eigenvalues of differential operators on submanifolds has profound implications in both geometry and physics. In particular, the Jacobi operator, which represents the linearization of the mean curvature, plays a central role in understanding the stability of minimal submanifolds. 

For submanifolds $\Sigma$ immersed in Euclidean space, {the Jacobi} operator is expressed as 
\[
L = -\Delta - |\sigma|^2,
\]
where $\Delta$ is the Laplace operator, and $|\sigma|^2$ is the squared norm of the second fundamental form of $\Sigma$. 

In this setting, Harrell and Loss \cite{HarrellLoss} demonstrated that if $\Sigma^n$ is a compact orientable hypersurface in $\mathbb{R}^{n+1}$, then the second eigenvalue $\lambda_2(L)$ of $L$ is {nonpositive}, with equality ($\lambda_2(L) = 0$) occurring if and only if $\Sigma$ is a round sphere. This result resolved a conjecture previously posed by Alikakos and Fusco \cite{AlikakosFusco}.

El Soufi and Ilias, in their seminal work \cite{ElSoufiIlias}, extended the results of Harrell and Loss by providing sharp upper bounds for the second eigenvalue of the Schr\"odinger operator $S = -\Delta + q$, for compact submanifolds in simply connected space forms, in terms of the total mean curvature of $\Sigma$ and the mean value of the potential $q$. Their results further demonstrated that for compact submanifolds in these spaces, the second eigenvalue of the Jacobi operator is {nonpositive}, achieving zero if and only {if} the submanifold is a geodesic sphere (see \cite[Corollary 2.1]{ElSoufiIlias}).

Building on these foundational results, the third-named author \cite{Mendes} further explored the spectral properties of the Jacobi operator for immersed hypersurfaces, providing new geometric insights into eigenvalue bounds. Specifically, he characterized certain closed immersed hypersurfaces through the second eigenvalue of the Jacobi operator, extending the work of El Soufi and Ilias to more general contexts, including warped product manifolds. Additionally, he showed that the Clifford tori in $\mathbb{S}^3$ maximize the second eigenvalue of the Jacobi operator among all compact orientable surfaces with positive genus.

In this paper, we extend previous results by characterizing surfaces immersed in higher-dimensional spheres through the second eigenvalue of the Jacobi operator. More precisely, our first theorem is as follows:

\begin{maintheorem}\label{theorem1}
Let \(\Sigma\) be a closed surface {with nonpositive Euler characteristic}, fully immersed in \(\mathbb{S}^n\). Then, the second eigenvalue of the Jacobi operator of \(\Sigma\) in \(\mathbb{S}^n\),
\[
L = -\Delta - |\sigma|^2 - 2,
\]
satisfies
\[
\lambda_2(L) \leq -2.
\]
Furthermore, if \(\lambda_2(L) = -2\), then \(\Sigma\) is orientable and one of the following holds:
\begin{enumerate}
    \item \(n = 3\) and \(\Sigma\) is congruent to the Clifford torus;
    \item \(n = 5\) and \(\Sigma\) is congruent to the equilateral torus.
   
\end{enumerate}
\end{maintheorem}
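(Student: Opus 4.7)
My plan follows the El Soufi--Ilias conformal test-function method combined with Hersch's balancing trick. Let $\varphi_1>0$ be a first eigenfunction of $L$ and let $x:\Sigma\to\S^n\subset\R^{n+1}$ be the immersion. By a standard degree argument applied to the family of conformal transformations of $\S^n$, I can find a conformal map $\phi:\S^n\to\S^n$ such that the components $\tilde x_i:=(\phi\circ x)_i$, $0\le i\le n$, all satisfy the balancing condition $\int_\Sigma \tilde x_i\,\varphi_1\,dA_g=0$. Plugging each $\tilde x_i$ into the variational characterization of $\lambda_2(L)$ and summing in $i$, while using $\sum_i \tilde x_i^2=1$ and the two-dimensional conformal invariance of the Dirichlet energy (which gives $\sum_i\int_\Sigma|\nabla_g\tilde x_i|^2\,dA_g=2V(\phi)$, where $V(\phi):=\mathrm{Area}(\phi\circ x)$), I obtain
\[
\lambda_2(L)\,\mathrm{Area}(\Sigma,g)\;\le\;2V(\phi)-\int_\Sigma(|\sigma|^2+2)\,dA_g.
\]
Thus $\lambda_2(L)\le -2$ reduces to establishing $2V(\phi)\le\int_\Sigma|\sigma|^2\,dA_g$.

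For that core estimate I combine two classical ingredients. First, the Gauss equation for a surface in $\S^n$ together with Gauss--Bonnet yields
\[
\int_\Sigma|\sigma|^2\,dA_g=4\int_\Sigma|\vec H_x|^2\,dA_g+2\,\mathrm{Area}(\Sigma,g)-4\pi\chi(\Sigma).
\]
Second, the Willmore functional $W_{\R^{n+1}}(y)=\int|\vec H_y|_{\R^{n+1}}^2\,dA_y$ is invariant under M\"obius transformations of $\R^{n+1}$, and for any immersion into $\S^n\subset\R^{n+1}$ one has $|\vec H|_{\R^{n+1}}^2=|\vec H|_{\S^n}^2+1$. Applying the invariance to $x$ and $\phi\circ x$ gives $V(\phi)+\int|\vec H_{\phi\circ x}|^2\,dA_{\phi\circ x}=\mathrm{Area}(\Sigma,g)+\int|\vec H_x|^2\,dA_g$, and eliminating $V(\phi)$ produces
\[
\int_\Sigma|\sigma|^2\,dA_g-2V(\phi)\;=\;2\int|\vec H_x|^2\,dA_g+2\int|\vec H_{\phi\circ x}|^2\,dA_{\phi\circ x}-4\pi\chi(\Sigma),
\]
which is nonnegative thanks to $\chi(\Sigma)\le 0$. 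This proves $\lambda_2(L)\le -2$.

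For the equality case $\lambda_2(L)=-2$, every inequality in the chain must be saturated: $\chi(\Sigma)=0$, both $x$ and $\phi\circ x$ are minimal in $\S^n$, and each $\tilde x_i$ is an $L$-eigenfunction with eigenvalue $-2$. Rewriting the eigenvalue equation yields $\Delta_g\tilde x_i=-|\sigma|^2\tilde x_i$; combined with minimality of $\phi\circ x$ (which gives $\Delta_{\tilde g}\tilde x_i=-2\tilde x_i$, hence $\Delta_g\tilde x_i=-2\mu^2\tilde x_i$ via the 2D conformal rule $\Delta_g=\mu^2\Delta_{\tilde g}$ for $\tilde g=\mu^2 g$), one deduces $|\sigma|^2=2\mu^2$ pointwise. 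A compatibility check using the Gauss equation for both $x$ and $\phi\circ x$ forces $\mu$ to be constant, so $\phi$ is an isometry and $\Sigma$ is a fully immersed flat minimal torus in $\S^n$ with $|\sigma|^2=2$. The classification of full flat minimal immersions of tori into spheres (Kenmotsu/Bryant-type rigidity) then identifies these as exactly the Clifford torus in $\S^3$ and the equilateral torus in $\S^5$; orientability follows because a flat minimal immersion of the Klein bottle would lift to one of these tori, contradicting fullness of the quotient.

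The main technical hurdle will be the equality analysis: while the inequality itself reduces cleanly to book-keeping with Gauss--Bonnet and conformal invariance of the Willmore functional, pinning down the equality cases requires showing $\mu$ must be constant, excluding the Klein bottle, and invoking the exact classification of full flat minimal tori in $\S^n$ to single out $n\in\{3,5\}$.
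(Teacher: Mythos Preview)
Your inequality argument is correct and essentially identical to the paper's: the paper proves $|\Sigma_y|\le\mathcal W(\Sigma)$ by a direct integration-by-parts identity (its Lemma~2.1), whereas you phrase the same fact as conformal invariance of the Willmore energy in $\R^{n+1}$. Either way one lands on $\lambda_2|\Sigma|\le -2\mathcal W(\Sigma)+4\pi\chi(\Sigma)$.

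The equality analysis, however, has real gaps. First, your ``compatibility check using the Gauss equation for both $x$ and $\phi\circ x$'' does not force $\mu$ to be constant: from $|\sigma|^2=2\mu^2$ and minimality of $x$ you get $K_g=1-\mu^2$, but the Gauss equation for $\phi\circ x$ only gives $K_{\tilde g}=1-\tfrac12|\tilde\sigma|^2$, and you have no independent control on $|\tilde\sigma|^2$. The paper instead uses the rigidity clause of its Lemma~2.1: when $x$ is minimal and $|\Sigma_y|=|\Sigma|$, one has $y^N=0$, whence $h=\langle x,y\rangle$ satisfies $\nabla^2 h=-h\,g$, and Obata's theorem forces $y=0$ (since $\Sigma\not\cong\S^2$). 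This is the missing step that makes $\phi=\id$.

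Second, and more seriously, your final classification claim is false as stated. There are infinitely many full flat minimal tori in spheres (already in $\S^5$, and in every odd-dimensional sphere), so ``Kenmotsu/Bryant-type rigidity'' does not single out the Clifford and equilateral tori. What does single them out is the extra information you have but do not use: once $|\sigma|^2\equiv 2$, the operator $L$ equals $-\Delta-4$, so $\lambda_2(L)=-2$ means the first nonzero Laplace eigenvalue is $2$, and the coordinate functions $x_i$ lie in that eigenspace. The paper then invokes the El~Soufi--Ilias theorem (tori fully immersed by \emph{first} Laplace eigenfunctions are Clifford in $\S^3$ or equilateral in $\S^5$). Likewise, your Klein bottle exclusion via ``the lift contradicts fullness of the quotient'' is not a valid argument; the paper excludes it by the El~Soufi--Giacomini--Jazar theorem: the only Klein bottle fully and minimally immersed by first eigenfunctions is Lawson's $\tilde\tau_{3,1}\subset\S^4$, which is not flat, contradicting $K=1-\tfrac12|\sigma|^2=0$.
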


In fact, we obtain a more general result that estimates  \(\lambda_2(L)|\Sigma|\) in terms of the Willmore energy (see Theorem \ref{theoremW}).

Additionally, we characterize totally geodesic tori using the second eigenvalue of the Jacobi operator in the product space \(\mathbb{S}^1(r) \times \mathbb{S}^2(s)\). More precisely:

\begin{maintheorem}\label{theorem2}
Let \(\Sigma\) be a closed, orientable surface of positive genus immersed in \(M = \mathbb{S}^1(r) \times \mathbb{S}^2(s)\). If \(r \geq s\), then the second eigenvalue \(\lambda_2(L)\) of the Jacobi operator of \(\Sigma\) in \(M\),
\[
L = -\Delta - (|\sigma|^2 + \operatorname{Ric}_M(N,N)),
\]
satisfies
\[
\lambda_2(L) \leq 0.
\]
Furthermore, if \(\lambda_2(L) = 0\), then \(r = s\), and \(\Sigma\) is congruent to the totally geodesic torus \(\mathbb{S}^1(r) \times \mathbb{S}^1(r)\) in \(\mathbb{S}^1(r) \times \mathbb{S}^2(r)\).
\end{maintheorem}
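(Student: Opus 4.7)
The plan is to follow the Ros--Urbano strategy: realize $M=\S^1(r)\times \S^2(s)$ isometrically as a submanifold of $\R^{2}\times\R^{3}$, and use the restrictions to $\Sigma$ of the ambient coordinate functions $x=(x^1,x^2)$ and $y=(y^1,y^2,y^3)$ as test functions in the variational characterization of $\lambda_2(L)$. Decomposing the unit normal as $N=N_1+N_2$ according to $TM = T\S^1 \oplus T\S^2$, so that $|N_1|^2+|N_2|^2 = 1$, one has $q:=|\sigma|^2+\Ric_M(N,N)=|\sigma|^2+|N_2|^2/s^2$.

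For each coordinate function the identity $\int_{\Sigma} fLf=\int_{\Sigma}|\nabla_{\Sigma} f|^2 -\int_{\Sigma} q f^{2}$ can be evaluated using $\Delta_{\S^1(r)} x^a=-x^a/r^2$, $\Delta_{\S^2(s)} y^i=-2y^i/s^2$, and the trace identities $\sum_j |e_j^{(\ell)}|^2 = \dim \S^\ell - |N_\ell|^2$ for $\{e_1,e_2\}$ an orthonormal frame of $T\Sigma$. Summing each family gives
\[
\sum_{a=1}^2 \int_{\Sigma} x^a L x^a \;=\; \int_{\Sigma} |N_2|^2 \Bigl(1-\tfrac{r^2}{s^2}\Bigr) \;-\;r^2 \int_{\Sigma}|\sigma|^2
\]
and $\sum_{i=1}^3 \int_{\Sigma} y^i L y^i = 2\int_{\Sigma}|N_1|^2-s^2\int_{\Sigma}|\sigma|^2$. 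The first is manifestly non-positive when $r\ge s$. For the second, the Gauss equation $|\sigma|^2=H^2-2K_\Sigma+2\overline K$ with the computation $\overline K=K_M(T\Sigma)=|N_1|^2/s^2$ (which follows from the product structure via $|e_1^{(2)}\wedge e_2^{(2)}|^2 = |N_1|^2$) together with Gauss--Bonnet yield
\[
\sum_{i=1}^3 \int_{\Sigma} y^i L y^i \;=\; 4\pi s^2 \chi(\Sigma) - s^2 \int_{\Sigma} H^2,
\]
which is $\le 0$ precisely because $\chi(\Sigma)\le 0$.

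To deduce $\lambda_2(L)\le 0$, I would use the isometries $SO(2)\times SO(3)$ of $M$ to rotate the ambient coordinates so that $x^2$, $y^2$, $y^3$ are $L^2$-orthogonal to the first eigenfunction $\phi_1>0$. The variational characterization $\lambda_2(L) = \inf\{\int_\Sigma fLf/\int_\Sigma f^2 : f\perp \phi_1,\, f\ne 0\}$ then gives $\lambda_2(L)\le 0$ immediately whenever one of $\int_\Sigma x^2 Lx^2$, $\int_\Sigma y^2 Ly^2$, $\int_\Sigma y^3 Ly^3$ is non-positive. Otherwise all three are strictly positive, the two trace identities force $\int_\Sigma x^1 Lx^1 < 0$ and $\int_\Sigma y^1 Ly^1 < 0$, and one constructs the combination $g=\beta x^1 -\alpha y^1$ with $\alpha=\int_\Sigma x^1\phi_1$ and $\beta=\int_\Sigma y^1\phi_1$, which lies in $\phi_1^\perp$, and then shows $\int_\Sigma gLg\le 0$. \emph{The main obstacle} is controlling the cross term $\int_\Sigma x^1 L y^1=-\int_\Sigma N_1^1 N_2^1 - \int_\Sigma q\, x^1 y^1$ against the negative diagonal entries, equivalently showing that the quadratic form $A_{IJ}=\int_\Sigma F^I L F^J$ on the span of $F=(x^1,x^2,y^1,y^2,y^3)$ has at least two non-positive eigenvalues, so that Cauchy interlacing after restriction to $\phi_1^\perp$ delivers $\lambda_2(L)\le 0$.

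For the rigidity, $\lambda_2(L)=0$ forces equalities in every estimate. Equality in the $x$-trace yields either $r=s$ or $|N_2|\equiv 0$, together with $|\sigma|\equiv 0$; but $|N_2|\equiv 0$ means $N$ is valued in $T\S^1$, whence $T\Sigma\subset T\S^2$ and $\Sigma$ coincides with a leaf $\{p\}\times \S^2(s)$, which has genus $0$, contradicting the hypothesis. Hence $r=s$ and $|\sigma|\equiv 0$. Equality in the $y$-trace then forces $\chi(\Sigma)=0$ and $H\equiv 0$. Classifying totally geodesic $2$-tori in $\S^1(r)\times \S^2(r)$ (whose unit normal is parallel in $T\S^2(r)$) shows that $\Sigma$ is congruent, up to ambient isometry, to $\S^1(r)\times \S^1(r)$ with the second factor a great circle in $\S^2(r)$.
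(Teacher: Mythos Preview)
Your trace computations for $\sum_a \int_\Sigma x^a L x^a$ and $\sum_i \int_\Sigma y^i L y^i$ are correct, and the Gauss--equation manipulation turning the $y$-trace into $4\pi s^2\chi(\Sigma)-s^2\int_\Sigma H^2$ is clean. The difficulty you flag, however, is a genuine gap rather than a technicality. Rotations in $SO(2)\times SO(3)$ supply only three orthogonality conditions against $\phi_1$, leaving $x^1$ and $y^1$ unbalanced, and there is no a~priori control on the cross term $\int_\Sigma x^1 L y^1 = -\int_\Sigma N_1^1 N_2^1 - \int_\Sigma q\,x^1 y^1$. Your reformulation in terms of ``at least two non-positive eigenvalues of $A_{IJ}$'' does not follow from the two partial-trace inequalities alone: a symmetric $5\times 5$ matrix can carry arbitrary off-diagonal blocks between the $x$- and $y$-sectors, and the partial traces constrain neither the signature of $A$ nor the generalized eigenvalues of the pair $(A,G)$ with $G_{IJ}=\int_\Sigma F^I F^J$. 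So as written the inequality $\lambda_2(L)\le 0$ is not established. The rigidity step inherits the same problem: your argument uses equality in \emph{both} trace identities, but from $\lambda_2(L)=0$ you would only obtain $\int_\Sigma fLf=0$ for the single test function $f$ that happened to work, not for all five coordinates.

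The paper circumvents this by embedding $M=\S^1(r)\times\S^2(s)$ (after normalizing $r^2+s^2=1$) as a hypersurface of $\S^4\subset\R^5$ and applying the Li--Yau/Hersch balancing with the \emph{conformal} transformations $F_y$ of $\S^4$, $y\in\B^5$: there exists $y$ such that all five coordinates of $\psi=F_y\circ\x$ are simultaneously orthogonal to $\phi_1$. This eliminates the cross-term issue entirely and, in the equality case, makes every $\psi_i$ a legitimate test function. The price is that $\psi$ is no longer the linear coordinate vector, so your neat trace decomposition is unavailable; instead $\sum_i\int_\Sigma|\nabla\psi_i|^2=2|\Sigma_y|$, and the paper invokes a conformal-area lemma $|\Sigma_y|\le\int_\Sigma(1+|\vec H_\tau|^2)\,dv$ together with the extrinsic bound $|\rho|^2_\Sigma\le 2r^2/(1-r^2)$ on the second fundamental form of $M\subset\S^4$ restricted to $\Sigma$ (this is exactly where $r\ge s$ enters, playing the role of your $x$-trace inequality). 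The two Gauss equations, for $\Sigma\subset M$ and $\Sigma\subset\S^4$, plus Gauss--Bonnet then yield $\lambda_2(L)|\Sigma|\le -2\int_\Sigma|\vec H|^2\,dv+4\pi\chi(\Sigma)\le0$, and equality forces $\vec H=0$, $\chi(\Sigma)=0$, and equality in the $|\rho|^2_\Sigma$ bound, from which the rigidity is read off.
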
 

\begin{remark}
In \cite{Urbano2}, Urbano proved that the totally geodesic embedding $\mathbb{S}^1(r)\times \mathbb{S}^1(r)\subset\mathbb{S}^1(r)\times \mathbb{S}^2(r)$ is the only compact orientable minimal surface immersed in $\mathbb{S}^1(r)\times \mathbb{S}^2(r)$ with index one. It is worth to note that Theorem \ref{theorem2} provides an alternative proof of that result. In fact, let $\Sigma$ be an orientable compact minimal surface immersed in $\mathbb{S}^1(r)\times \mathbb{S}^2(r)$ with index one. First observe that $\Sigma$ has positive genus because, if not, $\Sigma$ should be equal to a slice $\{p\}\times\mathbb{S}^2(r)$, for some $p\in\mathbb{S}^1(r)$ (see \cite{ManzanoPlehnertTorralbo}), which are known to be stable. Thus, on one hand, by Theorem \ref{theorem2}, we have $\lambda_2(L)\leq 0$ and, on the other hand, $\lambda_2(L)\geq 0$ since $\Sigma$ has index one. Thus, $\lambda_2(L)=0$ and the conclusion follows from the rigidity part in Theorem \ref{theorem2}.
\end{remark}

{\bf Outline of the paper.} 
In Section \ref{pre}, we develop some preliminaries results on the conformal area of closed surfaces in higher-dimensional spheres and discuss properties of the extrinsic geometry of \( \mathbb{S}^1(r) \times \mathbb{S}^n(\sqrt{1 - r^2}) \) in the sphere. 
In Section \ref{spheres}, we prove a result that implies Theorem \ref{theorem1}, using test functions obtained from conformal maps.  
In Section~\ref{torus}, we present the proof of Theorem~\ref{theorem2}.
Finally, in Section \ref{exa}, we conclude by presenting examples that demonstrate the necessity of the hypotheses in Theorem \ref{theorem2}.

\section{Preliminaries}\label{pre}

\subsection{Area estimates for surfaces in the unit sphere}

In this section, we establish an important estimate for the area of conformally transformed surfaces in the unit sphere \(\mathbb{S}^n\) (see \cite{MontielRos}). These estimates will be crucial in the proof of Theorem \ref{theorem1}, particularly in the analysis of the second eigenvalue of the Jacobi operator. To this end, we first recall some properties of conformal transformations in \(\mathbb{S}^n\).

Let \(\mathbb{S}^n\) be the unit sphere, and let \(\mathbb{B}^{n+1}\) denote the open unit ball in the \((n+1)\)-dimensional Euclidean space \(\mathbb{R}^{n+1}\), with \(n \geq 3\). For any \(y \in \mathbb{B}^{n+1}\), define the conformal map \(F_y: \mathbb{S}^n \to \mathbb{S}^n\) by
\begin{align*}
F_y(x) = \frac{1 - |y|^2}{|x + y|^2}(x + y) + y.
\end{align*}
A direct computation shows that 
\begin{align*}
F_y^*ds^2 = \rho_y^2\,ds^2,
\end{align*}
where \(\rho_y: \mathbb{S}^n \to \mathbb{R}\) is the positive function given by
\begin{align*}
\rho_y(x) = \frac{1 - |y|^2}{|x + y|^2}.
\end{align*}
Thus, \(F_y\) is a conformal diffeomorphism of \(\mathbb{S}^n\) for every \(y \in \mathbb{B}^{n+1}\), with \(F_0 = \operatorname{id}\).


Now, let \(\Sigma \subset \mathbb{S}^n\) be a closed, connected surface immersed in \(\mathbb{S}^n\). The area of its conformal image \(\Sigma_y = F_y(\Sigma) \subset \mathbb{S}^n\) is given by
\begin{align}\label{eq:aux1}
|\Sigma_y| = \int_\Sigma \left( \frac{1 - |y|^2}{|\x(p) + y|^2} \right)^2 dv(p),
\end{align}
where \(\x: \Sigma \to \mathbb{S}^n \subset \mathbb{R}^{n+1}\) is the position map, \(\x(p) = p\), and \(dv\) is the volume element (or the Riemannian density, see \cite[Appendix B]{L18} for more details) of \(\Sigma\) with respect to the metric induced from \(\mathbb{S}^n\).

To obtain an upper bound for \(|\Sigma_y|\), we introduce a key auxiliary function. Fix \(z \in \mathbb{B}^{n+1}\) and define \(f: \Sigma \to \mathbb{R}\) by \(f = 1 + \langle \x, z \rangle\). It is well known that
\begin{align*}
\Delta \x + 2\x = 2\vec{H},
\end{align*}
where \(\Delta\) is the Laplace operator on \(\Sigma\) and \(\vec{H}\) is the mean curvature vector of \(\Sigma\) in \(\mathbb{S}^n\). Consequently,
\begin{align*}
\Delta f = -2\langle \x, z \rangle + 2\langle \vec{H}, z \rangle,
\end{align*}
which implies
\begin{align}\label{eq:aux2}
\Delta \ln f = \frac{\Delta f}{f} - \frac{|\nabla f|^2}{f^2} = -1 + \frac{1 - |z|^2}{f^2} + \frac{|z^N|^2}{f^2} + \frac{2\langle \vec{H}, z \rangle}{f},
\end{align}
where \(z^N\) is the component of \(z\) normal to \(\Sigma\) and tangent to \(\mathbb{S}^n\).

With these preliminaries, we now establish the following key estimate relating \(|\Sigma_y|\) to the Willmore energy of \(\Sigma\).

\begin{lemma}\label{lem:area.conformal}
Let \(\Sigma\) be an immersed surface in \(\mathbb{S}^n\). Then, for every \(y \in \mathbb{B}^{n+1}\), we have
\[
|\Sigma_y| \leq \mathcal{W}(\Sigma),
\]
where the Willmore energy of \(\Sigma\) is given by
\begin{equation}\label{Willmore}
\mathcal{W}(\Sigma) = \int_\Sigma \big(1+ |\vec{H}|^2\big)\, dv.
\end{equation}
Moreover, if equality holds for some \(y\), then \(\vec{H}=-\frac{y^N}{f}\). In particular, if \(\Sigma\) is minimal and equality holds, then \(\Sigma\) is isometric to the round sphere \(\mathbb{S}^2\) or \(y = 0\).
\end{lemma}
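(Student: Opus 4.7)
My plan is to apply the pointwise identity \eqref{eq:aux2} for a cleverly chosen $z \in \B^{n+1}$ so that $(1-|z|^2)/f^2$ equals $\rho_y^2$, then integrate over $\Sigma$ and close the estimate with Young's inequality on the mean-curvature cross term.

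I would take $z := 2y/(1+|y|^2)$. A direct calculation gives $1-|z|^2 = (1-|y|^2)^2/(1+|y|^2)^2$ and $f = 1+\langle\x,z\rangle = |\x+y|^2/(1+|y|^2)$, so $(1-|z|^2)/f^2 = \rho_y^2$ and in particular $|\Sigma_y| = \int_\Sigma (1-|z|^2)/f^2\, dv$. Integrating \eqref{eq:aux2} over the closed surface $\Sigma$ kills the left-hand side, giving
\[
|\Sigma_y| = |\Sigma| - \int_\Sigma \frac{|z^N|^2}{f^2}\,dv - 2\int_\Sigma \frac{\langle \vec H, z\rangle}{f}\,dv.
\]
Since $\vec H$ is tangent to $\S^n$ and normal to $\Sigma$, one has $\langle \vec H, z\rangle = \langle \vec H, z^N\rangle$, and Young's inequality $-2\langle \vec H, z^N\rangle/f \leq |\vec H|^2 + |z^N|^2/f^2$ cancels the $|z^N|^2/f^2$ contributions, yielding $|\Sigma_y| \leq |\Sigma| + \int_\Sigma |\vec H|^2\, dv = \mathcal W(\Sigma)$.

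For the equality case, equality in Young forces the pointwise identity $\vec H = -z^N/f$; since $z^N$ is a positive scalar multiple of $y^N$, this is equivalent (after absorbing the scalar into the denominator) to the stated relation $\vec H = -y^N/f$. If $\Sigma$ is moreover minimal, $\vec H \equiv 0$ forces $y^N \equiv 0$ along $\Sigma$. To promote this to the desired rigidity, I would introduce $\phi(\x) := \langle \x, y\rangle$ on $\Sigma$ and, via the Gauss formula for $\Sigma \subset \S^n \subset \R^{n+1}$, compute the Hessian
\[
\nabla^2 \phi(X, Y) = \langle \sigma(X, Y), y\rangle - \phi\langle X, Y\rangle.
\]
Because $\sigma(X, Y)$ lies in the $\S^n$-normal bundle of $\Sigma$ and $y^N \equiv 0$, the first term vanishes, so $\nabla^2 \phi = -\phi\, g_\Sigma$. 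By Obata's theorem, if $\phi$ is non-constant, then $\Sigma$ is isometric to the round unit sphere $\S^2$; and if $\phi$ were constant, then together with $y^N \equiv 0$ the vector $y$ would be everywhere parallel to $\x$, which for a genuine surface forces $y = 0$. The main technical hurdle, beyond spotting the conformal reparametrization $z = 2y/(1+|y|^2)$, is precisely this Obata step and the verification that the Hessian identity holds.
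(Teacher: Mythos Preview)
Your proof is correct and follows exactly the paper's approach: the same substitution $z = 2y/(1+|y|^2)$, integration of \eqref{eq:aux2}, completing the square (which you phrase as Young's inequality) to get $|\Sigma| = |\Sigma_y| + \int_\Sigma\bigl(|z^N/f + \vec H|^2 - |\vec H|^2\bigr)\,dv$, and then the Obata argument applied to $\langle \x, y\rangle$ for the minimal rigidity. The only imprecision is your claim that $\vec H = -z^N/f$ is ``equivalent'' to $\vec H = -y^N/f$; they differ by the constant factor $2/(1+|y|^2)$, but this matches the lemma's own phrasing and is immaterial for the minimal case where both reduce to $y^N=0$.
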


\begin{proof}
Given $y \in \B^{n+1}$, define $z = 2(1 + |y|^2)^{-1}y \in \B^{n+1}$. Observe that
\begin{align}\label{eq:aux3}
\frac{1 - |z|^2}{(1 + \langle \x, z \rangle)^2} = \left( \frac{1 - |y|^2}{|\x + y|^2} \right)^2.
\end{align}
Substituting \eqref{eq:aux2} and \eqref{eq:aux3} into \eqref{eq:aux1}, using the divergence theorem and rearranging, we find that
\begin{align*}
|\Sigma| = |\Sigma_y| + \int_\Sigma \left(\left|\frac{z^N}{f}+\vec{H}\right|^2 - |\vec{H}|^2\right) dv,
\end{align*}
and from that we get the first part. 
In the nonorientable case, we use the orientable double covering $\pi: \hat{\Sigma} \to \Sigma$, equipped with the pullback metric. Consequently, the Riemannian density $d\hat{v}$ on $\hat{\Sigma}$ satisfies $\pi^*(\Delta h\, dv) = \hat{\Delta} \hat{h}\, d\hat{v}$. In particular, we obtain  
\[
\int_\Sigma \Delta h\, dv = 2\int_{\hat{\Sigma}} \hat{\Delta} \hat{h}\, d\hat{v} = 0,
\]
since $\pi$ is a local isometry and $\hat{h} = \pi^*(h)$.

If \(\Sigma\) is minimal and \(|\Sigma| = |\Sigma_y|\), then \(z^N = 0\), which is equivalent to \(y^N = 0\). Let \(h = \langle \x, y \rangle\) on \(\Sigma\). Then
\[
\nabla h = y - h\x \quad \text{and} \quad \nabla^2 h = -h \langle\cdot,\cdot\rangle.
\]
By Obata's theorem (cf.\ \cite[Theorem A]{O62}), either \(\Sigma\) is isometric to \(\mathbb{S}^2\), or \(h = 0\) on \(\Sigma\).
If \(h = 0\), then \(\nabla h = 0= y\).
\end{proof}

\subsection{Extrinsic geometry of \(\mathbb{S}^1(r) \times \mathbb{S}^n(\sqrt{1 - r^2})\) in \(\mathbb{S}^{n+2}\)}

In this section, we study the geometric properties of the product space \text{\(\mathbb{S}^1(r) \times \mathbb{S}^n(\sqrt{1 - r^2})\)} when viewed as a hypersurface of \text{\(\mathbb{S}^{n+2}\)}. These properties will be applied in the proof of Theorem~\ref{theorem2}.

Fix \( p \in M = \mathbb{S}^1(r) \times \mathbb{S}^n(\sqrt{1 - r^2}) \), and let \( \{e_1, e_2, \dots, e_{n+1}\} \) be an orthonormal basis of \( T_p M \), where \( e_1 \) is tangent to \( \mathbb{S}^1(r) \) and \( e_2, \dots, e_{n+1} \) are tangent to \( \mathbb{S}^n(\sqrt{1 - r^2}) \). A straightforward computation shows that \( e_1, e_2, \dots, e_{n+1} \) are principal directions of \( M \), with corresponding principal curvatures
\[
\kappa_1 = -\frac{\sqrt{1 - r^2}}{r}, \quad \kappa_2 = \dots = \kappa_{n+1} = \frac{r}{\sqrt{1 - r^2}}.
\]

The Weingarten operator of \( M \) in \( \mathbb{S}^{n+2} \) is given by
\begin{equation}\label{eqAX}
A(X) = -\dfrac{\sqrt{1 - r^2}}{r} \langle X, e_1\rangle e_1 + \sum_{i=2}^{n+1} \dfrac{r}{\sqrt{1 - r^2}} \langle X, e_i\rangle e_i, \quad X \in T_p M.
\end{equation}
Then we obtain
\begin{equation}\label{eqA2}
|A(X)|^2 = \left(\dfrac{1}{r^2} - \dfrac{1}{1 - r^2}\right)\langle X, e_1\rangle^2 + \dfrac{r^2}{1 - r^2}|X|^2.
\end{equation}

Let \( \Sigma \) be an immersed surface in \( M = \mathbb{S}^1(r) \times \mathbb{S}^n(\sqrt{1 - r^2}) \subset \mathbb{S}^{n+2} \). Denote the second fundamental forms of \( \Sigma \) in \( M \) and in \( \mathbb{S}^{n+2} \) by \( \sigma \) and \( \tau \), respectively. Then, we can write:
\begin{equation}\label{eq:taurhosigma}
\tau(X,Y) = \rho(X,Y) + \sigma(X,Y),
\end{equation}
and
\[
|\tau(X,Y)|^2 = |\rho(X,Y)|^2 + |\sigma(X,Y)|^2
\]
for all \( X, Y \in \mathfrak{X}(\Sigma) \), where \( \rho \) is the second fundamental form of \( M \) in \( \mathbb{S}^{n+2} \), and in scalar form is given by \( \rho(X,Y) = \langle A X, Y \rangle \).

Therefore, if \( \{X_1, X_2\} \) is an orthonormal basis for \( T_p \Sigma \), with \( p \in \Sigma \), then
\begin{align*}
|\tau|^2 &= \sum_{i,j=1}^2 |\tau(X_i, X_j)|^2 \\
&= \sum_{i,j=1}^2 |\rho(X_i, X_j)|^2 + \sum_{i,j=1}^2 |\sigma(X_i, X_j)|^2 \\
&= |\rho|^2_\Sigma + |\sigma|^2,
\end{align*}
where
\[
|\rho|^2_\Sigma := \sum_{i,j=1}^2 |\rho(X_i, X_j)|^2
\]
is independent of the orthonormal basis \( \{X_1, X_2\} \) chosen.

Moreover, note that
\begin{equation}\label{rhoSigma2}
|\rho|^2_\Sigma = \sum_{i,j=1}^2 |\rho(X_i, X_j)|^2 = |A(X_1)|^2 + |A(X_2)|^2 - \sum_{i=1}^2 |A(X_i)^\perp|^2,
\end{equation}
where \( A(X_i)^\perp \) denotes the orthogonal projection of \( A(X_i) \) onto \( (T_p \Sigma)^\perp \), the orthogonal complement of \( T_p \Sigma \) in \( T_p M \).

\begin{proposition}\label{prop:bound.psff}
Let \( \Sigma \) be a connected, orientable, closed surface immersed in \( \mathbb{S}^1(r) \times \mathbb{S}^n(\sqrt{1 - r^2}) \), with \( \frac{1}{\sqrt{2}} \leq r < 1 \). Then the following holds:
\begin{equation}\label{eqrho3}
|\rho|^2_\Sigma \leq \dfrac{2r^2}{1 - r^2}.
\end{equation}
Moreover, in the case of equality:
\begin{itemize}
\item[(i)] If \( r > \frac{1}{\sqrt{2}} \), then \( \Sigma \subset \{\theta\} \times \mathbb{S}^n(\sqrt{1 - r^2}) \) for some \( \theta \in \mathbb{S}^1(r) \);
\item[(ii)] If \( r = \frac{1}{\sqrt{2}} \), then either \( \Sigma \subset \{\theta\} \times \mathbb{S}^n(\sqrt{1 - r^2}) \) for some \( \theta \in \mathbb{S}^1(r) \), or \( \Sigma \) is a torus and \( 2K = 4|\vec{H}|^2 - |\sigma|^2 \) on \( \Sigma \).
\end{itemize}
\end{proposition}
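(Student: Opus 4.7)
The plan is to extract \eqref{eqrho3} from the decomposition \eqref{rhoSigma2}, exploiting that the coefficient $\frac{1-2r^2}{r^2(1-r^2)}$ appearing in \eqref{eqA2} is nonpositive precisely when $r\geq 1/\sqrt{2}$. Fix $p\in\Sigma$, an orthonormal basis $\{X_1,X_2\}$ of $T_p\Sigma$, and write $e_1^T$ for the component of $e_1$ tangent to $\Sigma$. Since $\sum_{i=1}^{2}\langle X_i, e_1\rangle^2 = |e_1^T|^2$, equation \eqref{eqA2} yields
\[
|A(X_1)|^2 + |A(X_2)|^2 \;=\; \frac{1-2r^2}{r^2(1-r^2)}\,|e_1^T|^2 + \frac{2r^2}{1-r^2} \;\leq\; \frac{2r^2}{1-r^2}.
\]
Combining this with \eqref{rhoSigma2} and the nonnegativity of $|A(X_i)^\perp|^2$ gives \eqref{eqrho3}.

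Next I would analyze equality. Equality in \eqref{eqrho3} forces, at every $p\in\Sigma$, both $\frac{1-2r^2}{r^2(1-r^2)}|e_1^T|^2 = 0$ and $\sum_{i=1}^{2}|A(X_i)^\perp|^2 = 0$. In case (i), $r>1/\sqrt{2}$, the first identity immediately forces $e_1^T\equiv 0$, so $e_1$ is everywhere normal to $\Sigma$; this makes the differential of the projection $\Sigma\to\mathbb{S}^1(r)$ vanish, and by connectedness $\Sigma$ lies in a single slice $\{\theta\}\times\mathbb{S}^n(\sqrt{1-r^2})$. In case (ii), $r=1/\sqrt{2}$, the first condition is vacuous and $A$ simplifies to $A(X)=X-2\langle X, e_1\rangle e_1$. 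For $X\in T_p\Sigma$ this gives $A(X)^\perp = -2\langle X, e_1\rangle e_1^\perp$, so
\[
\sum_{i=1}^{2}|A(X_i)^\perp|^2 \;=\; 4|e_1^T|^2\bigl(1-|e_1^T|^2\bigr),
\]
whose vanishing forces $|e_1^T|\in\{0,1\}$ pointwise. By continuity and connectedness, the alternative is uniform on $\Sigma$: the ``normal'' alternative again places $\Sigma$ in a slice, while the ``tangent'' alternative makes $e_1$ a nowhere-zero tangent vector field on $\Sigma$, forcing $\chi(\Sigma)=0$; since $\Sigma$ is closed and orientable, it is a torus.

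To finish case (ii) I would derive $2K=4|\vec{H}|^2-|\sigma|^2$ in the torus alternative. A direct expansion on an orthonormal frame gives $4|\vec{H}|^2-|\sigma|^2 = 2\bigl(\langle\sigma_{11},\sigma_{22}\rangle - |\sigma_{12}|^2\bigr)$, so the Gauss equation for $\Sigma$ in $M$ reads $2K = 2K_M(T_p\Sigma) + 4|\vec{H}|^2 - |\sigma|^2$. Because $M$ is a Riemannian product and its $\mathbb{S}^1$ factor is flat, the curvature tensor of $M$ only records the $\mathbb{S}^n(\sqrt{1-r^2})$ component; hence any tangent 2-plane containing $e_1$ has $K_M=0$, yielding the desired identity. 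The main obstacle is essentially bookkeeping: once $|A(X)|^2$ is computed on the tangent frame, the inequality is automatic, the equality discussion reduces to a pointwise-to-global upgrade by connectedness, and the curvature identity is a direct consequence of the product structure.
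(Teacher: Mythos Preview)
Your proof is correct and follows essentially the same route as the paper: you use \eqref{eqA2} and \eqref{rhoSigma2} to obtain the inequality, analyze equality via the position of $e_1$ relative to $T_p\Sigma$, and invoke the Gauss equation together with the product structure of $M$ for the curvature identity. The only cosmetic differences are that the paper phrases the dichotomy $e_1\in T_p\Sigma$ versus $e_1\perp T_p\Sigma$ as $A$-invariance of $T_p\Sigma$ (rather than your explicit computation of $A(X)^\perp$ when $r=1/\sqrt{2}$), and concludes that $\Sigma$ is a torus via the submersion $\pi_1|_\Sigma:\Sigma\to\mathbb{S}^1$ rather than Poincar\'e--Hopf.
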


\begin{proof}
Let \( p \in \Sigma \). Given any orthonormal basis \( \{X_1, X_2\} \) of \( T_p\Sigma \), it follows from equation \eqref{rhoSigma2} that
\[
|\rho|_\Sigma^2 \leq |A(X_1)|^2 + |A(X_2)|^2,
\]
with equality if and only if \( T_p\Sigma \) is invariant under the shape operator \( A: T_p M \to T_p M \).

From equation \eqref{eqA2}, we obtain:
\[
|A(X_1)|^2 + |A(X_2)|^2 = \left( \frac{1}{r^2} - \frac{1}{1 - r^2} \right)\left( \langle X_1, e_1 \rangle^2 + \langle X_2, e_1 \rangle^2 \right) + \frac{2r^2}{1 - r^2} \leq \frac{2r^2}{1 - r^2},
\]
where \( e_1 = \partial_\theta / r \). This proves inequality \eqref{eqrho3}.

Now suppose that equality holds in \eqref{eqrho3}. Then \( T_p\Sigma \) is invariant under \( A \) for all \( p \in \Sigma \). Since \( e_1 \) is a principal direction of \( A \) associated with the principal curvature \( -\sqrt{1 - r^2}/r \), we must have, at each point \( p \in \Sigma \),
\begin{itemize}
\item[(i)] either \( e_1(p) \perp T_p\Sigma \), or
\item[(ii)] \( e_1(p) \in T_p\Sigma \).
\end{itemize}

Note that case (i) implies \( \Sigma \subset \{\theta\} \times \mathbb{S}^n(\sqrt{1 - r^2}) \) for some \( \theta \in \mathbb{S}^1(r) \), which is the only possibility when \( r > 1/\sqrt{2} \).

Now suppose \( r = 1/\sqrt{2} \) and that \( e_1(p) \in T_p\Sigma \) for all \( p \in \Sigma \). Then for any orthonormal basis \( \{X_1, X_2\} \) of \( T_p \Sigma \) with \( X_1 = e_1 \), we have \( A(X_2) = X_2 \).

Applying the Gauss equation for the immersion \( \Sigma \subset M \) and using that the sectional curvature of \( M \) vanishes in the direction \( (X_1, X_2) \), that is, \( K_M(X_1, X_2) = 0 \), we obtain:
\begin{align*}
K &= \langle \sigma(X_1, X_1), \sigma(X_2, X_2) \rangle - |\sigma(X_1, X_2)|^2 \\
  &= \frac{1}{2}(4|\vec{H}|^2 - |\sigma|^2).
\end{align*}

Finally, since \( e_1(p) \in T_p\Sigma \) for all \( p \in \Sigma \), the restriction \( \pi_1|_\Sigma : \Sigma \to \mathbb{S}^1 \), where \( \pi_1 : \mathbb{S}^1(r) \times \mathbb{S}^n(\sqrt{1 - r^2}) \to \mathbb{S}^1(r) \) is the canonical projection \( \pi_1(\theta, w) = \theta \), is a submersion. This implies that \( \Sigma \) is foliated by circles, and therefore \( \Sigma \) is a torus.
\end{proof}

\section{Spectral characterization of surfaces with nonpositive Euler characteristic in higher-dimensional spheres}\label{spheres}

In this section, we discuss the spectral characterization of surfaces with nonpositive Euler characteristic immersed in higher-dimensional spheres. Such characterizations naturally arise in the study of extremal metrics and minimal immersions into spheres. This is, in fact, a highly active area of research; see, for instance, \cite{ElSoufiIlias2, ElSoufiIlias, Karpukhin2016, Karpukhin2019, KPS2025, Korevaar, LiYau, MontielRos, Nadirashvili, YangYau} and the references therein.

We begin by recalling some fundamental properties of the  special surfaces appearing in Theorem \ref{theorem1}. 

\subsection{Minimal {tori} in spheres}\label{tori}
The \textit{Clifford torus} is the standard immersion of the flat torus \( \mathbb{S}^1(1/\sqrt{2}) \times \mathbb{S}^1(1/\sqrt{2}) \) into \( \mathbb{S}^3 \). This surface is a particularly remarkable minimal surface in the 3-sphere and has been characterized in various ways. For example, it is characterized by the norm of its second fundamental form, as shown in \cite{CdCK}, and by its Morse index in \cite{Urbano1}. Moreover, it is known to minimize the Willmore energy among positive genus surfaces, as established in \cite{MN}, and it has been proven to be the unique genus one embedded minimal surface in \(\mathbb{S}^3\), according to \cite{B}.

The \textit{equilateral torus} is defined as the quotient \( \mathbb{R}^2 / \Gamma \), where
\[
\Gamma = \mathbb{Z}(1,0) \oplus \mathbb{Z}(1/2, \sqrt{3}/2).
\]

In \cite{Nadirashvili}, Nadirashvili showed that the flat metric corresponding to the equilateral lattice maximizes the first eigenvalue of the Laplace operator among all metrics of the same volume on the 2-torus.

This torus also admits an isometric immersion in the unit sphere \( \mathbb{S}^5 \subset \mathbb{R}^6 \) and, along with the Clifford torus, was also characterized by El Soufi and Ilias in the following theorem. 
We recall that an immersion \( h: \Sigma \to \mathbb{S}^{n} \) is said to be \textit{full} if its image is not contained in a great sphere of $\mathbb{S}^n$, that is, if the coordinate functions \( h_1, \dots, h_{n+1} \) are linearly independent.

\begin{theorem}[El Soufi-Ilias, \cite{ElSoufiIlias2}]\label{EI}
Let \( \Sigma \) be a topological torus fully immersed in the unit sphere \( \mathbb{S}^n \). If the coordinate functions of \( \Sigma \) are first eigenfunctions of the Laplace operator on \( \Sigma \), then either:
\begin{enumerate}
    \item[(a)] \( n = 3 \) and \( \Sigma \) is congruent to the Clifford torus; or
    \item[(b)] \( n = 5 \) and \( \Sigma \) is congruent to the equilateral torus.
\end{enumerate}
\end{theorem}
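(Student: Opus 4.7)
The plan is to reduce the eigenfunction hypothesis to a minimal immersion with $\lambda_1 = 2$ via Takahashi's theorem, establish that the induced metric is flat by saturating the Li--Yau/Montiel--Ros conformal volume inequality, and then classify the extremal flat tori by lattice geometry.

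First, I would invoke the Beltrami-type identity $\Delta h = 2\vec{H}_{\mathbb{S}^n} - 2h$ for the position map $h \colon \Sigma \to \mathbb{S}^n \subset \mathbb{R}^{n+1}$. Combined with $\Delta h_i = -\lambda_1 h_i$, this gives $(2 - \lambda_1)\,h = 2\vec{H}_{\mathbb{S}^n}$. Because $h$ is normal to $\mathbb{S}^n$ while $\vec{H}_{\mathbb{S}^n}$ is tangent to it, both sides must vanish, so the immersion is minimal and $\lambda_1 = 2$; this is Takahashi's observation.

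Next, I would balance the immersion using Hersch's trick (applying some $F_y$ from Lemma \ref{lem:area.conformal} so that $\int F_y(h)_i\,dv = 0$) and then test the Rayleigh quotient with the balanced coordinates. For a conformal map to $\mathbb{S}^n$ one has $|\nabla(F_y \circ h)|^2 = 2\rho_y^2$, so $\lambda_1 |\Sigma| \leq 2 |\Sigma_y|$. Since $\Sigma$ is minimal, Lemma \ref{lem:area.conformal} gives $|\Sigma_y| \leq \mathcal{W}(\Sigma) = |\Sigma|$. With $\lambda_1 = 2$ the whole chain collapses to equality, and because $\Sigma$ is a torus (not $\mathbb{S}^2$), the rigidity clause of Lemma \ref{lem:area.conformal} forces $y = 0$. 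The resulting saturation of the Rayleigh quotient, combined with minimality and the nowhere-vanishing Hopf differential of a minimal torus in $\mathbb{S}^n$, forces the conformal factor relating the induced metric to the flat uniformization to be constant, so $\Sigma$ is isometric to a flat torus $\mathbb{R}^2/\Lambda$.

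With $\Sigma = \mathbb{R}^2/\Lambda$ flat and $\lambda_1 = 2$, the coordinate functions $h_1, \ldots, h_{n+1}$ are $n+1$ linearly independent first eigenfunctions. The first eigenspace is spanned by $\cos(2\pi\langle v, \cdot\rangle)$ and $\sin(2\pi\langle v, \cdot\rangle)$ as $v$ ranges over the shortest nonzero vectors of the dual lattice $\Lambda^*$. Full immersion therefore requires at least $(n+1)/2$ pairs of shortest vectors, and expanding $\sum h_i^2 \equiv 1$ in this Fourier basis translates the sphere constraint into the condition that those shortest vectors form a tight frame in $\mathbb{R}^2$ whose outer-product sum is a multiple of the identity. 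In two dimensions this selects only the square lattice (4 shortest vectors, $n=3$, Clifford torus) or the hexagonal lattice (6 shortest vectors, $n=5$, equilateral torus); the scale of $\Lambda$ is then fixed by $\lambda_1 = 2$.

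The main obstacle is the flatness step: converting equality in the Li--Yau/conformal-volume chain into constancy of the Gauss curvature, thereby ruling out non-flat minimal tori (such as the Lawson examples $\xi_{m,n}$) as potential equality cases. The key technical input is the pointwise rigidity in Lemma \ref{lem:area.conformal}, which pins down the balancing position uniquely and allows one to identify the induced metric with the flat uniformization; the lattice classification in the final step is then essentially combinatorial.
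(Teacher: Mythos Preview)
The paper does not prove Theorem~\ref{EI}: it is quoted from \cite{ElSoufiIlias2} and invoked as a black box in the rigidity step of Theorem~\ref{theoremW}. There is no in-paper argument to compare against, so I comment on your sketch directly.

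Your step~1 (Takahashi: minimal with $\lambda_1=2$) and step~3 (lattice classification of flat tori admitting a full first-eigenfunction immersion) are essentially correct. The genuine gap is step~2, flatness. After Lemma~\ref{lem:area.conformal} forces $y=0$, the saturation of the Rayleigh quotient by $\psi_i=h_i$ merely returns the hypothesis that the coordinates are first eigenfunctions; nothing new has been extracted from the conformal-volume chain. Your appeal to a ``nowhere-vanishing Hopf differential'' does not close this: in $\mathbb{S}^3$ that only produces a conformal coordinate in which the Hopf differential is constant, which does \emph{not} force the conformal factor of the induced metric to be constant---non-flat minimal tori in $\mathbb{S}^3$ exist in abundance (the whole integrable-systems/spectral-curve family) and every one of them has nowhere-vanishing Hopf differential---and for $n>3$ there is no single canonical Hopf differential to invoke. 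The actual El Soufi--Ilias argument proceeds through their identification of metrics admitting a minimal isometric immersion by first eigenfunctions with $\lambda_1$-\emph{extremal} metrics, and then a separate classification of extremal metrics on the torus; flatness enters there, not via the Li--Yau/Montiel--Ros saturation, which at this point in your outline is circular.
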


\subsection{The Klein bottle as a minimal surface in the sphere}\label{Klein}

The Klein bottle, \( K = \mathbb{R}P^2 \# \mathbb{R}P^2 \), is a closed, nonorientable surface of zero Euler characteristic.

We recall that, in his seminal work \cite{Lawson}, Lawson constructed infinite families of minimal surfaces in spheres for every genus. For instance, the map  
\[
\psi_{3,1}(x,y) = (\cos 3x \cos y, \sin 3x \cos y, \cos x \sin y, \sin x \sin y)
\]
defines a minimal immersion of \( \mathbb{R}^2 \) into \( \mathbb{S}^4 \), whose image is a torus denoted by \( \tau_{3,1} \).

Lawson observed that if \( \psi: S^2 \to \mathbb{S}^3 \) is a minimal immersion and \( \psi^*: S \to \mathbb{S}^5 \) denotes its Gauss map, then the bipolar map  
\[
\tilde\psi = \psi \wedge \psi^\ast : S \to \mathbb{S}^5
\]
is also a minimal immersion. It turns out that, in the case of \( \psi_{3,1} \) (and for other tori), its image, denoted by  \( \tilde\tau_{3,1} \), actually lies in \( \mathbb{S}^4 \), an equator of \( \mathbb{S}^5 \). By \cite[Theorem 1.3.1(3)]{Lapointe}, we know that \( \tilde\tau_{3,1} \) is a Klein bottle.

We now recall the following important spectral characterization of the Klein bottle as a minimal surface in \( \mathbb{S}^4 \) analogous to Theorem \ref{EI}. This result completes the previous work of Jakobson, Nadirashvili, and Polterovich in \cite{JakobsonNadirashviliPolterovich}.

\begin{theorem}[El Soufi-Giacomini-Jazar, \cite{EGJ06}]\label{EGJ}
Let \( \Sigma \) be a topological Klein bottle that is minimally and fully immersed in \( \mathbb{S}^n \). If the coordinate functions of \( \Sigma \) are first eigenfunctions of the Laplace operator on \( \Sigma \), then \( n = 4 \) and \( \Sigma \) is congruent to Lawson's bipolar surface \( \tilde{\tau}_{3,1} \).
\end{theorem}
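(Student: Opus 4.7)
The plan is to translate the immersion-theoretic hypothesis into the language of $\lambda_1$-extremal metrics and then invoke the classification of such metrics on the Klein bottle. First, by Takahashi's theorem applied to the minimal isometric immersion $x:\Sigma\to\mathbb{S}^n\subset\mathbb{R}^{n+1}$, each coordinate function $x_i$ satisfies $\Delta x_i=-2x_i$; the hypothesis that every $x_i$ lies in the first eigenspace therefore forces $\lambda_1(\Sigma,g)=2$ and places the $(n+1)$-dimensional span of the coordinate functions inside that eigenspace. Since $\sum_i x_i^2\equiv 1$ on $\Sigma$, a standard argument of El Soufi-Ilias then shows that the induced metric $g$ is a critical point of the scale-invariant functional $g\mapsto\lambda_1(g)\operatorname{Area}(\Sigma,g)$; in other words, $g$ is $\lambda_1$-extremal on the Klein bottle.

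The second and decisive step is the classification of such extremal metrics. Jakobson, Nadirashvili and Polterovich identified in \cite{JakobsonNadirashviliPolterovich} an explicit candidate extremal metric on the Klein bottle: the quotient of a specific rectangular flat torus by a free orientation-reversing involution, equipped with a non-flat conformal factor expressed in terms of elliptic integrals; and they bounded $\lambda_1\cdot\operatorname{Area}$ on the Klein bottle by the corresponding value. The core of the El Soufi-Giacomini-Jazar argument \cite{EGJ06} is to promote this bound to a uniqueness statement: up to homothety and diffeomorphism, there is exactly one $\lambda_1$-extremal metric on the Klein bottle, namely the JNP metric. The proof exploits the extra $O(2)$-isometry forced on an extremal metric whose first eigenspace contains the coordinate functions of an $\mathbb{S}^n$-valued isometric embedding: after lifting to the orientation double cover, this symmetry reduces the problem to a single-variable ODE for the conformal factor, which can be integrated explicitly in terms of elliptic functions.

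Once the metric is pinned down, the third step is to identify the immersion. For the JNP metric, one computes the first eigenspace explicitly and verifies that it has dimension exactly $5$; any orthonormal basis of it yields, up to ambient isometry, a full isometric immersion into $\mathbb{S}^4$. Using Lawson's explicit formulas for $\psi_{3,1}\subset\mathbb{S}^3$ together with the bipolar construction $\tilde\psi=\psi\wedge\psi^*$ recalled in Section~\ref{Klein}, one matches this map with the bipolar surface $\tilde\tau_{3,1}$; the calculation \cite[Theorem 1.3.1(3)]{Lapointe} then confirms both that the image is a Klein bottle and that it lies in an equator $\mathbb{S}^4\subset\mathbb{S}^5$. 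Any full immersion $x$ satisfying our hypotheses must therefore agree with this model up to a congruence of $\mathbb{S}^n$, forcing $n=4$ and congruence to $\tilde\tau_{3,1}$.

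The main obstacle is the second step, the uniqueness of the $\lambda_1$-extremal metric on the Klein bottle. Unlike the torus case, where extremality already forces flatness and the conformal-volume method of El Soufi-Ilias quickly separates the Clifford and equilateral lattices, Klein bottle extremal metrics are not flat; their uniqueness rests on a delicate symmetrization argument combined with the explicit solution of an elliptic-function ODE, and this forms the technical heart of the EGJ paper. Modulo that step, both the translation to extremal metrics (first step) and the identification with the bipolar Lawson surface (third step) are routine computations once the structure of the first eigenspace is available and Lawson's explicit formulas are at hand.
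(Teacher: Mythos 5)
This statement is quoted by the paper from \cite{EGJ06}; the paper itself gives no proof of it, so there is no internal argument to compare against. Judged on its own terms, your outline is a broadly faithful description of the strategy of El Soufi--Giacomini--Jazar, but it is not a proof: the ``second and decisive step'' --- uniqueness of the $\lambda_1$-extremal metric on the Klein bottle --- \emph{is} the substance of the theorem, and you explicitly leave it as a black box (``this forms the technical heart of the EGJ paper''). Everything you do carry out (Takahashi's theorem giving $\Delta x_i=-2x_i$ and $\lambda_1=2$, the El Soufi--Ilias equivalence between minimal immersions by first eigenfunctions and $\lambda_1$-extremal metrics, the identification of the JNP metric with the induced metric on $\tilde\tau_{3,1}$) is the routine bookkeeping surrounding that core, so the proposal as written establishes nothing beyond what is already conceded to the literature.

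Two further points deserve attention even at the level of an outline. First, the conclusion $n=4$ is not fully accounted for: fullness plus the multiplicity bound $m(\lambda_1)\leq 5$ on the Klein bottle only gives $n\leq 4$, and one must separately exclude $n=2$ (the only minimal surface in $\mathbb{S}^2$ is $\mathbb{S}^2$ itself) and $n=3$ (no minimal Klein bottles exist in $\mathbb{S}^3$), neither of which your argument mentions; nor does knowing $\dim E_1=5$ by itself forbid a full immersion into a lower-dimensional sphere using a proper subspace of $E_1$. Second, two smaller inaccuracies: Jakobson--Nadirashvili--Polterovich did not establish the sharp upper bound for $\lambda_1\cdot\mathrm{Area}$ on the Klein bottle (they exhibited the critical metric and conjectured maximality; the global bound came later), and an arbitrary orthonormal basis of the first eigenspace does not automatically produce an isometric immersion into a round sphere --- one needs the specific normalization coming from extremality so that the squares of the chosen eigenfunctions sum to a constant and the induced metric is the given one.
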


\begin{remark}\label{nonflat}
    We point out that this metric is not flat, as proved by 
Nadirashvili \cite{Nadirashvili}. 

\end{remark}

\subsection{Eigenvalue estimate via Willmore {energy}}

In this subsection, we derive an upper bound for the second eigenvalue of the Jacobi operator of a surface in the sphere in terms of its Willmore energy \eqref{Willmore}. As a consequence, we obtain Theorem \ref{theorem1}. The result is stated as follows:

\begin{theorem}\label{theoremW}
Let \( \Sigma \) be a closed surface fully immersed in \( \mathbb{S}^n \). Then the second eigenvalue of the Jacobi operator
\[
L = -\Delta - |\sigma|^2 - 2
\]
satisfies the inequality
\[
\lambda_2(L)|\Sigma| \leq -2\mathcal{W}(\Sigma) + 4\pi \chi(\Sigma),
\]
where \( \mathcal{W}(\Sigma) \) denotes the Willmore energy of \( \Sigma \).

In particular, if \( \chi(\Sigma) \leq 0 \), then \( \lambda_2(L) \leq -2 \). If equality holds, that is, \( \lambda_2(L) = -2 \), then \(\Sigma\)  is orientable and one of the following occurs:
\begin{enumerate}
    \item \( n = 3 \) and \( \Sigma \) is congruent to the Clifford torus;
    \item \( n = 5 \) and \( \Sigma \) is congruent to the equilateral torus.
\end{enumerate}
\end{theorem}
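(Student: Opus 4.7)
The plan follows the classical variational method with conformally transformed test functions, in the spirit of El Soufi--Ilias. Let $\phi_1 > 0$ be a first eigenfunction of $L$. By a Hersch-type balancing argument, there exists $y \in \B^{n+1}$ such that the vector-valued integral $\int_\Sigma F_y \, \phi_1 \, dv$ vanishes in $\R^{n+1}$; writing $\phi^{(i)} = \langle F_y, e_i \rangle$ for $i = 1, \dots, n+1$, each component is $L^2$-orthogonal to $\phi_1$ and therefore admissible in the min-max characterization of $\lambda_2(L)$. Two geometric identities are crucial: $\sum_i (\phi^{(i)})^2 = |F_y|^2 = 1$ (since $F_y$ takes values in $\mathbb{S}^n$), and $\sum_i |\nabla \phi^{(i)}|^2 = 2 \rho_y^2$ (since $F_y$ is conformal with factor $\rho_y$ and $\Sigma$ is two-dimensional). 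Summing the $n+1$ min-max inequalities and using $\int_\Sigma \rho_y^2 \, dv = |\Sigma_y|$ yields
\[
\lambda_2(L)\,|\Sigma| \leq 2|\Sigma_y| - \int_\Sigma |\sigma|^2 \, dv - 2|\Sigma|.
\]

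Invoking the conformal area estimate $|\Sigma_y| \leq \mathcal{W}(\Sigma)$ from Lemma~\ref{lem:area.conformal}, and eliminating $\int |\sigma|^2 dv$ via the Gauss equation $2K = 2 + 4|\vec{H}|^2 - |\sigma|^2$ together with Gauss--Bonnet (which gives $\int_\Sigma |\sigma|^2 dv = 2|\Sigma| + 4\int_\Sigma |\vec{H}|^2 dv - 4\pi\chi(\Sigma)$), a direct algebraic rearrangement produces the announced inequality $\lambda_2(L)|\Sigma| \leq -2\mathcal{W}(\Sigma) + 4\pi \chi(\Sigma)$. When $\chi(\Sigma) \leq 0$, the trivial bound $\mathcal{W}(\Sigma) \geq |\Sigma|$ immediately yields $\lambda_2(L) \leq -2$.

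For the rigidity part, assume $\lambda_2(L) = -2$ and $\chi(\Sigma) \leq 0$. Equality must then propagate through every step: $\chi(\Sigma) = 0$, $\Sigma$ is minimal (from $\mathcal{W}(\Sigma) = |\Sigma|$), and $|\Sigma_y| = \mathcal{W}(\Sigma)$. The equality clause of Lemma~\ref{lem:area.conformal}, combined with minimality and the fact that $\chi(\Sigma) = 0$ excludes the round sphere, forces $y = 0$, hence $F_y = \id$ and the test functions are the coordinate functions $x^{(i)}$ of the immersion. Equality in each summed min-max inequality means every $x^{(i)}$ is a $\lambda_2(L)$-eigenfunction, $L x^{(i)} = -2 x^{(i)}$. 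Using $\Delta x = -2x$ (valid for minimal surfaces in $\mathbb{S}^n$), this collapses to $|\sigma|^2 \equiv 2$ (as $|x|^2 = 1$ prevents the simultaneous vanishing of all coordinates at any point); the Gauss equation then yields $K = 0$, so $\Sigma$ is flat. Since $|\sigma|^2$ is constant, $L$ differs from $-\Delta$ by a constant, and the coordinate functions are first nonconstant eigenfunctions of the Laplacian on $\Sigma$.

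At this point $\Sigma$ is a closed, flat, full, minimal surface in $\mathbb{S}^n$ with $\chi(\Sigma) = 0$, hence topologically either a torus or a Klein bottle. The Klein bottle case is ruled out: Theorem~\ref{EGJ} would force $\Sigma$ to be congruent to Lawson's bipolar surface $\tilde\tau_{3,1} \subset \mathbb{S}^4$, whose induced metric is not flat by Remark~\ref{nonflat}, contradicting the flatness derived above. Therefore $\Sigma$ is orientable, and Theorem~\ref{EI} delivers the two listed possibilities. The most delicate point is the equality analysis: simultaneously chasing saturation through the summed min-max, the conformal area bound, and the Gauss--Bonnet substitution to extract $y = 0$, minimality, $|\sigma|^2 \equiv 2$, and hence flatness --- it is precisely flatness that rules out the Klein bottle and allows Theorem~\ref{EI} to produce the two distinguished tori.
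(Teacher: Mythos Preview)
Your proof is correct and follows essentially the same approach as the paper's: the same Hersch--Li--Yau balancing to produce conformally adjusted coordinate test functions, the same use of Lemma~\ref{lem:area.conformal} combined with the Gauss equation and Gauss--Bonnet to reach the inequality, and the same equality analysis (minimality, $y=0$, $|\sigma|^2\equiv 2$, flatness) followed by the appeal to Theorems~\ref{EGJ} and~\ref{EI} to exclude the Klein bottle and pin down the two tori.
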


\begin{proof}
To analyze the second eigenvalue of the Jacobi operator \( L \), we consider a first eigenfunction \(\varphi > 0\) of \(L\). Following an argument contained in the work of Li and Yau \cite{LiYau} (see also \cite{Hersch}), there exists \( y \in \mathbb{B}^{n+1} \) such that  
\[
\int_\Sigma \varphi(F_y \circ \x) \, dv = 0,
\]
or equivalently,
\[
\int_\Sigma \varphi \psi_i \, dv = 0, \quad i = 1, \dots, n+1,
\]
where \(\psi_1, \dots, \psi_{n+1}\) are the coordinate functions of \( \psi = F_y \circ \x \). Using these functions as test functions for \(\lambda_2 = \lambda_2(L)\), we obtain
\begin{align*}
\lambda_2 \int_\Sigma \psi_i^2 \, dv &\leq \int_\Sigma \psi_i L \psi_i \, dv \\
&= \int_\Sigma |\nabla \psi_i|^2 \, dv - \int_\Sigma (|\sigma|^2 + 2) \psi_i^2 \, dv, \quad i = 1, \dots, n+1.
\end{align*}
Summing over \( i = 1, \dots, n+1 \) and using \(\sum_i \psi_i^2 = 1\), we get
\[
\lambda_2 |\Sigma| \leq \int_\Sigma |\nabla \psi|^2 \, dv - \int_\Sigma |\sigma|^2 \, dv - 2 |\Sigma|,
\]
where \( |\nabla \psi|^2 = \sum_i |\nabla \psi_i|^2 \). 
It is well known that the Dirichlet energy of \( \psi = F_y \circ \x \) satisfies (see, e.g., \cite[p.\ 7]{EI84})
\[
\int_\Sigma |\nabla \psi|^2 \, dv = 2 |\Sigma_y|.
\]
Thus, we obtain
\[
\lambda_2 |\Sigma| \leq 2 |\Sigma_y| - \int_\Sigma |\sigma|^2 \, dv - 2 |\Sigma|.
\]

From Lemma \ref{lem:area.conformal}, we have the area bound
\[
|\Sigma_y| \leq \int_\Sigma (1 + |\vec{H}|^2) \, dv,
\]
with equality if and only if \( \vec{H}=-\frac{z^N}{f} \), where \( z = 2(1 + |y|^2)^{-1}y \) and \( f(x)=1+\langle x, z\rangle \). 

From the Gauss equation, we recall that
\[
|\sigma|^2 = 2 + 4 |\vec{H}|^2 - 2K,
\]
where \( K \) is the Gaussian curvature of \( \Sigma \). Substituting this identity and using the Gauss-Bonnet theorem (cf.\ \cite[Theorem 9.7]{L18}), we obtain
\begin{align*}
\lambda_2 |\Sigma| 
&\leq 2 |\Sigma_y| - \int_\Sigma |\sigma|^2 \, dv - 2 |\Sigma| \\
&\leq 2 \int_\Sigma (1 + |\vec{H}|^2) \, dv - \int_\Sigma (2 + 4 |\vec{H}|^2 - 2K) \, dv - 2 |\Sigma| \\
&= -2 \int_\Sigma |\vec{H}|^2 \, dv + 4\pi \chi(\Sigma) - 2 |\Sigma|,
\end{align*}
where \( \chi(\Sigma) \) is the Euler characteristic of \( \Sigma \). 
And, thus, if $\chi(\Sigma)\le0$, we have
\[
\lambda_2|\Sigma| \leq -2\mathcal W(\Sigma)\leq -2|\Sigma|.
\]

Now, if \( \lambda_2(L) = -2 \) then \( \vec{H} \) must vanish, and all the above inequalities must be equalities. Consequently:
\begin{enumerate}
    \item \( \chi(\Sigma) = 0 \), so \( \Sigma \) is either a torus or a Klein bottle;
    \item \( |\Sigma_y| = |\Sigma| \), implying \( y = 0 \) by Lemma \ref{lem:area.conformal};
    \item The coordinate functions \( \psi_1, \dots, \psi_{n+1} \) are eigenfunctions of \( L \) associated with \( \lambda_2 = -2 \), meaning
    \[
    \Delta \psi + |\sigma|^2 \psi = 0.
    \]
\end{enumerate}

Since \( y=0 \), we have \( \psi = \x \), so
\[
\Delta \x + |\sigma|^2 \x = 0.
\]
On the other hand, by the minimality of \( \Sigma \), we also have \( \Delta \x + 2 \x = 0 \), implying \( |\sigma|^2 = 2 \). In particular, by the Gauss equation, \(\Sigma\) is flat and \(-\Delta = L+4\). Since \(\lambda_2(L)=-2\) we get that the first non-zero eigenvalue of the Laplacian of \(\Sigma\) is \(2\).

Therefore we have that \(\Sigma\) is flat and minimally immersed by  the first eigenfunctions of the Laplacian. Note that it follows from Theorem \ref{EGJ} and Remark \ref{nonflat} that it cannot be a Klein bottle.

Thus, $\Sigma$ is a torus and we conclude by Theorem \ref{EI} that $\Sigma$ is congruent to one of the following:

\begin{itemize}
    \item the Clifford torus if \( n = 3 \);
    
    \item the equilateral torus if \( n = 5 \).
\end{itemize}
\end{proof}

Using the solution of the Willmore conjecture of Marques and Neves \cite[Theorem A]{MN}, we obtain the following  direct consequence:

\begin{corollary}
Let \(\Sigma\) be a closed, orientable surface of positive genus immersed in \(\mathbb{S}^3\). Then,  
\[
\lambda_2(L)|\Sigma| \leq -4\pi^2.
\]  
Moreover, if equality holds, then \(\Sigma\) is the Clifford torus, up to conformal transformations of \(\mathbb{S}^3\).
\end{corollary}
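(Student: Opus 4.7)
The plan is to derive this statement as an immediate consequence of Theorem \ref{theoremW} combined with the Marques--Neves resolution of the Willmore conjecture, so the proof is genuinely short and the only thing to verify is that the hypotheses line up.

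First I would apply Theorem \ref{theoremW} directly to $\Sigma$. Since $\Sigma$ is orientable of genus $g\geq 1$, we have $\chi(\Sigma)=2-2g\leq 0$, so Theorem \ref{theoremW} yields
\[
\lambda_2(L)\,|\Sigma|\leq -2\,\mathcal{W}(\Sigma)+4\pi\chi(\Sigma)\leq -2\,\mathcal{W}(\Sigma).
\]
Next I would invoke the Marques--Neves theorem \cite[Theorem A]{MN}, which asserts $\mathcal{W}(\Sigma)\geq 2\pi^{2}$ for any closed surface of positive genus in $\mathbb{S}^{3}$, with equality characterizing the Clifford torus up to conformal diffeomorphisms of $\mathbb{S}^{3}$. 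Substituting gives $\lambda_{2}(L)\,|\Sigma|\leq -4\pi^{2}$, which is the desired inequality.

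For the equality case, saturating $\lambda_{2}(L)\,|\Sigma|=-4\pi^{2}$ forces both $4\pi\chi(\Sigma)=0$ (so $g=1$ and $\Sigma$ is a torus) and $\mathcal{W}(\Sigma)=2\pi^{2}$. The rigidity part of Marques--Neves then identifies $\Sigma$ as the Clifford torus up to conformal transformations of $\mathbb{S}^{3}$, which is precisely the claim.

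There is really no obstacle here beyond a small compatibility check: Marques--Neves is stated for embedded surfaces, while the corollary allows immersions. I would handle this by noting that if the immersion is not embedded, the Li--Yau inequality gives $\mathcal{W}(\Sigma)\geq 8\pi>2\pi^{2}$ at any point of multiplicity $\geq 2$, so the bound $\mathcal{W}(\Sigma)\geq 2\pi^{2}$ holds in either case and the equality case reduces to the embedded Clifford torus, consistent with the stated conclusion.
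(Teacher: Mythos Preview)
Your proof is correct and follows exactly the route the paper intends: the corollary is stated there as a ``direct consequence'' of Theorem~\ref{theoremW} together with the Marques--Neves theorem, with no further argument given. Your additional remark invoking the Li--Yau inequality to cover the non-embedded case is a useful detail that the paper leaves implicit.
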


\section{Spectral characterization of surfaces in $\S^1(r)\times\S^2(s)$}\label{torus}

In this section, we present the proof of Theorem \ref{theorem2}, which provides a sharp upper bound for the second eigenvalue of the Jacobi operator of compact surfaces immersed in product spaces of the form \( \mathbb{S}^1(r) \times \mathbb{S}^2(s) \). To simplify the expressions and computations, we apply a homothety so that the radii satisfy the normalization condition
\[
r^2 + s^2 = 1,
\]
which implies that the ambient manifold \( M = \mathbb{S}^1(r) \times \mathbb{S}^2(s) \) is isometrically embedded in the unit sphere \( \mathbb{S}^4 \subset \mathbb{R}^5 \). Under this normalization, the statement of Theorem \ref{theorem2} becomes:

\begin{theorem}
Let \(\Sigma\) be a closed, orientable surface of positive genus immersed in $M = \mathbb{S}^1(r) \times \mathbb{S}^2(\sqrt{1-r^2})$. If $r \geq 1/\sqrt{2}$, then the second eigenvalue \(\lambda_2(L)\) of the Jacobi operator of \(\Sigma\) in \(M\),
\[
L = -\Delta - (|\sigma|^2 + \operatorname{Ric}_M(N,N)),
\]
satisfies
\[
\lambda_2(L) \leq 0.
\]
Furthermore, if \(\lambda_2(L) = 0\), then $r=1/\sqrt{2}$, and \(\Sigma\) is congruent to the totally geodesic torus $\mathbb{S}^1(1/\sqrt{2})\times \mathbb{S}^1(1/\sqrt{2})$ in \(\mathbb{S}^1(1/\sqrt{2})\times \mathbb{S}^2(1/\sqrt{2})\).
\end{theorem}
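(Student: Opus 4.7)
The strategy mimics the proof of Theorem~\ref{theoremW} carried out in the ambient sphere $\mathbb{S}^4$, in which $M$ is isometrically embedded under the normalization $r^2+s^2=1$. Using the Hersch--Li--Yau balancing I pick $y\in\mathbb{B}^5$ so that the coordinate functions $\psi_1,\dots,\psi_5$ of $\psi=F_y\circ\x:\Sigma\to\mathbb{S}^4\subset\mathbb{R}^5$ are each orthogonal in $L^2(\Sigma)$ to a first eigenfunction of $L$. Plugging them as test functions, summing, and applying Lemma~\ref{lem:area.conformal} inside $\mathbb{S}^4$ yields
\[
\lambda_2(L)\,|\Sigma| \;\leq\; 2\,\mathcal{W}_{\mathbb{S}^4}(\Sigma) \;-\; \int_\Sigma |\sigma|^2\,dv \;-\; \int_\Sigma \operatorname{Ric}_M(N,N)\,dv.
\]

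The main algebraic work is to massage the right-hand side into a visibly nonpositive expression. Using the orthogonal decomposition $|\vec H_\Sigma^{\mathbb{S}^4}|^2 = |\vec H_\Sigma^M|^2 + \tfrac14(\operatorname{tr}_\Sigma \rho)^2$ (its two summands are the components tangent to and normal to $M$), the Gauss equation $|\sigma|^2 = 4|\vec H_\Sigma^M|^2 + 2K_M(X_1,X_2) - 2K$, Gauss--Bonnet $\int K = 2\pi\chi(\Sigma)$, and the product-scalar-curvature identity $\operatorname{Ric}_M(N,N) + K_M(X_1,X_2) = 1/s^2$, combined with the pointwise formulas $\operatorname{Ric}_M(N,N) = (1-a^2)/s^2$ and $\operatorname{tr}_\Sigma \rho = (a^2 + 2r^2 - 1)/(rs)$ (with $a = \langle N, e_1\rangle$ and $e_1$ the unit vector tangent to $\mathbb{S}^1(r)$), everything collapses to
\[
\lambda_2(L)\,|\Sigma| \;\leq\; -\,2\int_\Sigma |\vec H_\Sigma^M|^2\,dv \;-\; \int_\Sigma \frac{(a^2 + r^2 - s^2)(1-a^2)}{2r^2 s^2}\,dv \;+\; 4\pi\chi(\Sigma).
\]
When $r \geq s$ both factors $(a^2 + r^2 - s^2)$ and $(1 - a^2)$ in the integrand are nonnegative, and $\chi(\Sigma) \leq 0$ by the positive-genus hypothesis, so all three terms on the right are nonpositive and we conclude $\lambda_2(L) \leq 0$.

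For the rigidity statement, equality forces $\chi(\Sigma)=0$ (so $\Sigma$ is a torus), equality in Lemma~\ref{lem:area.conformal}, $\vec H_\Sigma^M \equiv 0$, and $(a^2 + r^2 - s^2)(1 - a^2) \equiv 0$ pointwise. If $r > s$, the product can vanish only where $a^2 = 1$, i.e.\ $N = \pm e_1$, which forces $\Sigma \subset \{\theta\}\times\mathbb{S}^2(s)$ and contradicts positive genus; hence $r = s = 1/\sqrt{2}$. In that case each point of $\Sigma$ satisfies $a = 0$ or $|a| = 1$, and connectedness together with the genus obstruction yields $a \equiv 0$, so $e_1 \in T\Sigma$ everywhere. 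This is precisely case (ii) of Proposition~\ref{prop:bound.psff}, giving $2K = 4|\vec H_\Sigma^M|^2 - |\sigma|^2 = -|\sigma|^2$; since $\int K = 0$ on a torus, we obtain $K \equiv 0$ and $|\sigma|^2 \equiv 0$, so $\Sigma$ is totally geodesic in $M$. A totally geodesic torus in $M$ whose tangent plane contains the $\mathbb{S}^1(r)$-direction must have the form $\mathbb{S}^1(r) \times C$ with $C$ a great circle of $\mathbb{S}^2(r)$, hence $\Sigma$ is congruent to $\mathbb{S}^1(r) \times \mathbb{S}^1(r)$.

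The principal obstacle is the algebraic bookkeeping in the second paragraph: condensing the combined output of Gauss, Gauss--Bonnet, the product scalar-curvature identity, and the explicit shape operator $A$ of $M$ in $\mathbb{S}^4$ into the clean product form $(a^2 + r^2 - s^2)(1 - a^2)$, whose sign is transparently controlled by the hypothesis $r \geq s$. A secondary delicate point is separating cleanly the $r > s$ and $r = s$ subcases in the rigidity analysis and invoking Proposition~\ref{prop:bound.psff}(ii) precisely to upgrade ``minimal in $M$, foliated by $\mathbb{S}^1$'' to ``totally geodesic''.
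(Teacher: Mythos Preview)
Your proposal is correct and follows the same overall strategy as the paper: balance via Hersch--Li--Yau inside $\mathbb{S}^4$, use the coordinates of $F_y\circ\x$ as test functions, and apply Lemma~\ref{lem:area.conformal}. The difference is purely in how the algebra is organized. The paper compares second fundamental forms through $|\tau|^2=|\rho|_\Sigma^2+|\sigma|^2$ and invokes Proposition~\ref{prop:bound.psff} to bound $|\rho|_\Sigma^2\le 2r^2/(1-r^2)$; the equality case of that proposition then drives the rigidity. You instead work with the mean-curvature decomposition $|\vec H_\Sigma^{\mathbb{S}^4}|^2=|\vec H_\Sigma^{M}|^2+\tfrac14(\operatorname{tr}_\Sigma\rho)^2$ and compute everything explicitly in terms of $a=\langle N,e_1\rangle$, collapsing the residual term to the factored integrand $(a^2+r^2-s^2)(1-a^2)/(2r^2s^2)$. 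Your route avoids the full $|\rho|_\Sigma^2$ estimate (only the trace enters), makes the role of $r\ge s$ completely transparent, and actually yields a slightly sharper pointwise inequality; the paper's route is more geometric and packages the key estimate once and for all in Proposition~\ref{prop:bound.psff}.

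One small gap in your rigidity step: from ``$\Sigma$ is a totally geodesic torus in $M$ with $e_1\in T\Sigma$'' you conclude directly that $\Sigma$ is congruent to $\mathbb{S}^1(r)\times\mathbb{S}^1(r)$. Since $\Sigma$ is only \emph{immersed}, its image is indeed $\mathbb{S}^1(r)\times C$ for a great circle $C$, but the immersion could a priori be a nontrivial finite covering. The paper closes this by observing that on such a cover $L=-\Delta-2$, so $\lambda_2(L)=0$ forces the first nonzero Laplace eigenvalue to equal $2$, which singles out the $1$-sheeted cover. You should add this last sentence.
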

\begin{proof}
Let $\varphi > 0$ be a first eigenfunction of $L$. By Li and Yau's argument \cite{LiYau}, there exists $y \in \mathbb{B}^5$ such that
\[
\int_\Sigma \varphi (F_y \circ \x) \, dv = 0,
\]
i.e.,
\[
\int_\Sigma \varphi \psi_i \, dv = 0, \quad i = 1, \ldots, 5,
\]
where $\x: \Sigma \to M \subset \mathbb{S}^4 \subset \mathbb{R}^5$ is the position vector and $\psi = F_y \circ \x$ has coordinate functions $\psi_1, \dots, \psi_5$. Using $\psi_1, \dots, \psi_5$ as test functions for $\lambda_2(L)$ and applying the Gauss equation, we obtain:
\begin{align*}
\lambda_2(L)|\Sigma| &\leq \int_\Sigma |\nabla \psi|^2 \, dv - \int_\Sigma \left( |\sigma|^2 + \operatorname{Ric}_M(N,N) \right) \, dv \\
&= \int_\Sigma |\nabla \psi|^2 \, dv - \frac{1}{2} \int_\Sigma \left( R_M + 4|\vec{H}|^2 + |\sigma|^2 - 2K \right) \, dv,
\end{align*}
where $R_M = \frac{2}{1 - r^2}$ is the scalar curvature of $M = \mathbb{S}^1(r) \times \mathbb{S}^2(\sqrt{1 - r^2})$.

On the other hand, the Gauss equation for $\Sigma \subset \mathbb{S}^4$ gives:
\[
|\tau|^2 = 2 + 4|\vec{H}_\tau|^2 - 2K,
\]
where $\vec{H}_\tau$ is the mean curvature vector of $\Sigma$ in $\mathbb{S}^4$. From Proposition~\ref{prop:bound.psff}, we also have:
\[
|\tau|^2 = |\rho|_\Sigma^2 + |\sigma|^2 \leq \frac{2r^2}{1 - r^2} + |\sigma|^2,
\]
which implies:
\[
|\sigma|^2 \geq 2 + 4|\vec{H}_\tau|^2 - 2K - \frac{2r^2}{1 - r^2}.
\]

Combining the inequalities, we deduce:
\[
\frac{1}{2} \left( R_M + 4|\vec{H}|^2 + |\sigma|^2 - 2K \right) \geq 2 + 2|\vec{H}|^2 + 2|\vec{H}_\tau|^2 - 2K.
\]

Therefore,
\begin{align*}
\lambda_2(L)|\Sigma| 
&\leq \int_\Sigma |\nabla \psi|^2 \, dv - \int_\Sigma \left( 2 + 2|\vec{H}|^2 + 2|\vec{H}_\tau|^2 - 2K \right) \, dv.
\end{align*}

Now, applying Lemma~\ref{lem:area.conformal}:
\[
\int_\Sigma |\nabla \psi|^2 \, dv = 2|\Sigma_y| \leq 2 \int_\Sigma \left( 1 + |\vec{H}_\tau|^2 \right) \, dv,
\]
we get:
\[
\lambda_2(L)|\Sigma| \leq -2 \int_\Sigma |\vec{H}|^2 \, dv + 4\pi \chi(\Sigma) \leq 0.
\]

This proves that $\lambda_2(L) \leq 0$.

Suppose now that $\lambda_2(L) = 0$. Then all inequalities above must be equalities. In particular:
\begin{enumerate}
\item[(i)] $\vec{H} = 0$, so $\Sigma$ is minimal in $M$;
\item[(ii)] $\chi(\Sigma) = 0$, hence $\Sigma$ is a torus;
\item[(iii)] $|\rho|_\Sigma^2 = \dfrac{2r^2}{1 - r^2}$.
\end{enumerate}

By Proposition~\ref{prop:bound.psff}, equality in (iii) implies $r = 1/\sqrt{2}$ and that $\Sigma$ is flat and totally geodesic in $M$. Therefore, $\Sigma$ is a finite covering of the totally geodesic torus $\mathbb{S}^1(1/\sqrt{2}) \times \mathbb{S}^1(1/\sqrt{2}) \subset \mathbb{S}^1(1/\sqrt{2}) \times \mathbb{S}^2(1/\sqrt{2})$. Since $L = \Delta + 2$ and $\lambda_2(L) = 0$, it follows that $\Sigma$ must coincide with this torus.
\end{proof}

\section{Examples of flat tori in $\mathbb{S}^1(r) \times \mathbb{S}^2(s)$, with $r \leq s$}\label{exa}

In this final section, we provide explicit examples that illustrate the necessity of the hypotheses in Theorem \ref{theorem2}, especially the condition \( r \geq s \). For this purpose, we compute the second Jacobi eigenvalue for a family of standard flat tori immersed in the product space \( \mathbb{S}^1(r) \times \mathbb{S}^2(s) \).

Let \( \mathbb{T}^2_{r,t} = \mathbb{S}^1(r) \times \mathbb{S}^1(t) \subset \mathbb{R}^4 \) be the flat torus given by
\[
\mathbb{T}_{r,t}^2 = \{(x_1, y_1, x_2, y_2) \in \mathbb{R}^4 \mid x_1^2 + y_1^2 = r^2, \, x_2^2 + y_2^2 = t^2\}.
\]
Fix \( h \in \mathbb{R} \), and define an immersion into \( \mathbb{S}^1(r) \times \mathbb{S}^2(s) \) by
\[
\phi(x_1, y_1, x_2, y_2) = (x_1, y_1, x_2, y_2, h),
\]
where \( s^2 = t^2 + h^2 \). The image is a flat torus embedded in \( M^3 = \mathbb{S}^1(r) \times \mathbb{S}^2(s) \).

A straightforward computation shows that the principal curvatures of this torus are
\[
\kappa_1 = 0, \quad \kappa_2 = \frac{h}{st}.
\]
The eigenvalues of the Laplacian on \( \mathbb{T}_{r,t}^2 \) are given by
\[
\lambda_{m,n} = \frac{m^2}{r^2} + \frac{n^2}{t^2}, \quad m,n \in \mathbb{Z},
\]
so the first nonzero eigenvalue is
\[
\lambda_2(-\Delta) = \min\left\{\frac{1}{r^2}, \frac{1}{t^2}\right\}.
\]

Since the Jacobi operator is
\[
L = -\Delta - \left( \frac{1}{s^2} + \frac{h^2}{s^2 t^2} \right) = -\Delta - \frac{1}{t^2},
\]
we obtain
\[
\lambda_2(L) = \lambda_2(-\Delta) - \frac{1}{t^2} = \min\left\{\frac{1}{r^2}, \frac{1}{t^2}\right\} - \frac{1}{t^2}.
\]

We now consider two cases:

\paragraph{\textbf{Case 1: \( r \geq t \)}}  
In this case, \( \lambda_2(-\Delta) = \frac{1}{r^2} \leq \frac{1}{t^2} \), and therefore:
\[
\lambda_2(L) = \frac{1}{r^2} - \frac{1}{t^2} \leq 0.
\]
This is consistent with the upper bound in Theorem \ref{theorem2}.

\paragraph{\textbf{Case 2: \( r \leq t \)}}  
Here, \( \lambda_2(-\Delta) = \frac{1}{t^2} \), and hence:
\[
\lambda_2(L) = \frac{1}{t^2} - \frac{1}{t^2} = 0.
\]
In this case, although the second eigenvalue is zero, the torus is \emph{not} totally geodesic (unless \( h = 0 \)). Indeed, since \( s^2 = t^2 + h^2 \), we find that \( r \leq t < s \), and hence \( r < s \).

These examples highlight the sharpness of Theorem \ref{theorem2} and its dependence on the geometric constraint \( r \geq s \).

\section*{Acknowledgments}
We would like to thank Lucas Ambrozio for his interest in this work and for his valuable comments and suggestions.
    The authors acknowledge financial support from the Brazilian National Council for Scientific and Technological Development (CNPq) and the Fundação de Amparo à Pesquisa do Estado de Alagoas (FAPEAL). Specifically, M.B. received funding from CNPq (Grants: 308440/2021-8, 405468/2021-0, and 402463/2023-9); M.C. received funding from CNPq (Grants: 405468/2021-0 and 11136/2023-0) and FAPEAL (Grant: 60030.0000000323/2023); A.B. received funding from CNPq (Grants: 405468 /2021-0 and 309867/2023-1) and FAPEAL (Grant: 60030.0000002254/2022); and I.N. received funding from CNPq (Grant: 402463/2023-9).



\bibliographystyle{amsplain}
\bibliography{bibliography.bib}

\providecommand{\bysame}{\leavevmode\hbox to3em{\hrulefill}\thinspace}
\providecommand{\MR}{\relax\ifhmode\unskip\space\fi MR }
\providecommand{\MRhref}[2]{%
  \href{http://www.ams.org/mathscinet-getitem?mr=#1}{#2}
}
\providecommand{\href}[2]{#2}
\begin{thebibliography}{10}

\bibitem{AlikakosFusco}
Nicholas~D. Alikakos and Giorgio Fusco, \emph{The spectrum of the
  {C}ahn-{H}illiard operator for generic interface in higher space dimensions},
  Indiana Univ. Math. J. \textbf{42} (1993), no.~2, 637--674. \MR{1237062}

\bibitem{B}
Simon Brendle, \emph{Embedded minimal tori in {$S^3$} and the {L}awson
  conjecture}, Acta Math. \textbf{211} (2013), no.~2, 177--190. \MR{3143888}

\bibitem{CdCK}
S.~S. Chern, M.~do~Carmo, and S.~Kobayashi, \emph{Minimal submanifolds of a
  sphere with second fundamental form of constant length}, Functional
  {A}nalysis and {R}elated {F}ields ({P}roc. {C}onf. for {M}. {S}tone, {U}niv.
  {C}hicago, {C}hicago, {I}ll., 1968), Springer, New York, 1970, pp.~59--75.
  \MR{0273546}

\bibitem{EGJ06}
Ahmad El~Soufi, Hector Giacomini, and Mustapha Jazar, \emph{A unique extremal
  metric for the least eigenvalue of the {Laplacian} on the {Klein} bottle},
  Duke Math. J. \textbf{135} (2006), no.~1, 181--202.

\bibitem{EI84}
Ahmad El~Soufi and Sa\"id Ilias, \emph{Conformal volume and its applications
  according to {Li} and {Yau}}, S{\'e}min. {Th{\'e}or}. {Spectrale}
  {G{\'e}om}., {Chamb{\'e}ry}-{Grenoble} 1983-1984, {No}.{VII}, 15 p. (1984).,
  1984.

\bibitem{ElSoufiIlias2}
\bysame, \emph{Riemannian manifolds admitting isometric immersions by their
  first eigenfunctions}, Pacific J. Math. \textbf{195} (2000), no.~1, 91--99.
  \MR{1781616}

\bibitem{ElSoufiIlias}
\bysame, \emph{Second eigenvalue of {S}chr\"odinger operators and mean
  curvature}, Comm. Math. Phys. \textbf{208} (2000), no.~3, 761--770.
  \MR{1736334}

\bibitem{HarrellLoss}
Evans~M. Harrell, II and Michael Loss, \emph{On the {L}aplace operator
  penalized by mean curvature}, Comm. Math. Phys. \textbf{195} (1998), no.~3,
  643--650. \MR{1641019}

\bibitem{Hersch}
Joseph Hersch, \emph{Quatre propri\'et\'es isop\'erim\'etriques de membranes
  sph\'eriques homog\`enes}, C. R. Acad. Sci. Paris S\'er. A-B \textbf{270}
  (1970), A1645--A1648. \MR{292357}

\bibitem{JakobsonNadirashviliPolterovich}
Dmitry Jakobson, Nikolai Nadirashvili, and Iosif Polterovich, \emph{Extremal
  metric for the first eigenvalue on a {K}lein bottle}, Canad. J. Math.
  \textbf{58} (2006), no.~2, 381--400. \MR{2209284}

\bibitem{Karpukhin2016}
Mikhail Karpukhin, \emph{Upper bounds for the first eigenvalue of the
  {L}aplacian on non-orientable surfaces}, Int. Math. Res. Not. IMRN (2016),
  no.~20, 6200--6209. \MR{3579963}

\bibitem{Karpukhin2019}
\bysame, \emph{On the {Y}ang-{Y}au inequality for the first {L}aplace
  eigenvalue}, Geom. Funct. Anal. \textbf{29} (2019), no.~6, 1864--1885.
  \MR{4034923}

\bibitem{KPS2025}
Mikhail Karpukhin, Romain Petrides, and Stern Daniel, \emph{Existence of
  metrics maximizing the first laplace eigenvalue on closed surfaces}, arXiv
  preprint \textbf{2505.05293} (2025), 23p., Available at
  \url{https://arxiv.org/abs/2505.05293}.

\bibitem{Korevaar}
Nicholas Korevaar, \emph{Upper bounds for eigenvalues of conformal metrics}, J.
  Differential Geom. \textbf{37} (1993), no.~1, 73--93. \MR{1198600}

\bibitem{Lapointe}
Hugues Lapointe, \emph{Spectral properties of bipolar minimal surfaces in
  {$\Bbb S^4$}}, Differential Geom. Appl. \textbf{26} (2008), no.~1, 9--22.
  \MR{2393969}

\bibitem{Lawson}
H.~Blaine Lawson, Jr., \emph{Complete minimal surfaces in {$S\sp{3}$}}, Ann. of
  Math. (2) \textbf{92} (1970), 335--374. \MR{270280}

\bibitem{L18}
John~M. Lee, \emph{Introduction to {Riemannian} manifolds}, 2nd edition ed.,
  Grad. Texts Math., vol. 176, Cham: Springer, 2018.

\bibitem{LiYau}
Peter Li and Shing~Tung Yau, \emph{A new conformal invariant and its
  applications to the {W}illmore conjecture and the first eigenvalue of compact
  surfaces}, Invent. Math. \textbf{69} (1982), no.~2, 269--291. \MR{674407}

\bibitem{ManzanoPlehnertTorralbo}
Jos\'e{}~M. Manzano, Julia Plehnert, and Francisco Torralbo, \emph{Compact
  embedded minimal surfaces in {$\mathbb{S}^2\times\mathbb{S}^1$}}, Comm. Anal.
  Geom. \textbf{24} (2016), no.~2, 409--429. \MR{3514565}

\bibitem{MN}
Fernando~C. Marques and Andr\'e Neves, \emph{Min-max theory and the {W}illmore
  conjecture}, Ann. of Math. (2) \textbf{179} (2014), no.~2, 683--782.
  \MR{3152944}

\bibitem{Mendes}
Abra\~ao Mendes, \emph{Characterization of hypersurfaces via the second
  eigenvalue of the {J}acobi operator}, Proc. Amer. Math. Soc. \textbf{147}
  (2019), no.~8, 3515--3521. \MR{3981129}

\bibitem{MontielRos}
Sebasti\'an Montiel and Antonio Ros, \emph{Minimal immersions of surfaces by
  the first eigenfunctions and conformal area}, Invent. Math. \textbf{83}
  (1986), no.~1, 153--166. \MR{813585}

\bibitem{Nadirashvili}
N.~Nadirashvili, \emph{Berger's isoperimetric problem and minimal immersions of
  surfaces}, Geom. Funct. Anal. \textbf{6} (1996), no.~5, 877--897.
  \MR{1415764}

\bibitem{O62}
Morio Obata, \emph{Certain conditions for a {Riemannian} manifold to be
  isometric with a sphere}, J. Math. Soc. Japan \textbf{14} (1962), 333--340.

\bibitem{Urbano1}
Francisco Urbano, \emph{Minimal surfaces with low index in the
  three-dimensional sphere}, Proc. Amer. Math. Soc. \textbf{108} (1990), no.~4,
  989--992. \MR{1007516}

\bibitem{Urbano2}
\bysame, \emph{Second variation of one-sided complete minimal surfaces}, Rev.
  Mat. Iberoam. \textbf{29} (2013), no.~2, 479--494.

\bibitem{YangYau}
Paul~C. Yang and Shing~Tung Yau, \emph{Eigenvalues of the {L}aplacian of
  compact {R}iemann surfaces and minimal submanifolds}, Ann. Scuola Norm. Sup.
  Pisa Cl. Sci. (4) \textbf{7} (1980), no.~1, 55--63. \MR{577325}

\end{thebibliography}

\end{document}